\documentclass[a4paper,oneside,11pt, reqno]{amsart}

\usepackage{mathrsfs}
\usepackage{amsfonts,amssymb,amsmath,amsthm}
\usepackage{url}
\usepackage{enumerate}

\setcounter{tocdepth}{1}

\usepackage{bbm}
\usepackage{float}
\usepackage{pstricks}
\usepackage{amscd}
\usepackage{amsmath}
\usepackage{amsxtra}
\usepackage[T1]{fontenc}
\usepackage[utf8]{inputenc}
\usepackage{lipsum}
\usepackage{tikz}
\usetikzlibrary{arrows}
\usetikzlibrary{calc}
\usepackage{hyperref}

\theoremstyle{plain}
\newtheorem{theorem}{Theorem}[section]

\newtheorem{proposition}[theorem]{Proposition}
\newtheorem{corollary}[theorem]{Corollary}

\theoremstyle{definition}

\newtheorem{example}[theorem]{Example}

\theoremstyle{remark}
\newtheorem{remark}[theorem]{Remark}

\newtheorem{case-new}{Case}

\numberwithin{equation}{section}


\usepackage{enumitem}
\usepackage{etoolbox}

\makeatletter
\@namedef{subjclassname@2020}{%
  \textup{2020} Mathematics Subject Classification}
\makeatother

\usepackage{mathrsfs}
\usepackage{epsfig}
\usepackage[active]{srcltx}

\newcommand{\ncom}{\newcommand}
\ncom{\bq}{\begin{equation}}
\ncom{\eq}{\end{equation}}
\ncom{\beqn}{\begin{eqnarray*}}
\ncom{\eeqn}{\end{eqnarray*}}
\ncom{\beq}{\begin{eqnarray}}
\ncom{\eeq}{\end{eqnarray}}
\ncom{\nno}{\nonumber}
\ncom{\rar}{\rightarrow}
\ncom{\Rar}{\Rightarrow}
\ncom{\noin}{\noindent}
\ncom{\bc}{\begin{centre}}
\ncom{\ec}{\end{centre}}
\ncom{\sz}{\scriptsize}
\ncom{\rf}{\ref}
\ncom{\sgm}{\sigma}
\ncom{\Sgm}{\Sigma}
\ncom{\dt}{\delta}
\ncom{\Dt}{Delta}
\ncom{\lmd}{\lambda}
\ncom{\Lmd}{\Lambda}
\ncom{\eps}{\epsilon}
\ncom{\pcc}{\stackrel{P}{>}}
\ncom{\dist}{{\rm\,dist}}
\ncom{\sspan}{{\rm\,span}}
\ncom{\im}{{\rm Im\,}}
\ncom{\sgn}{{\rm sgn\,}}
\ncom{\ba}{\begin{array}}
\ncom{\ea}{\end{array}}
\ncom{\eop}{\hfill{{\rule{2.5mm}{2.5mm}}}}
\ncom{\eoe}{\hfill{{\rule{1.5mm}{1.5mm}}}}
\ncom{\eof}{\hfill{{\rule{1.5mm}{1.5mm}}}}
\ncom{\hone}{\mbox{\hspace{1em}}}
\ncom{\htwo}{\mbox{\hspace{2em}}}
\ncom{\hthree}{\mbox{\hspace{3em}}}
\ncom{\hfour}{\mbox{\hspace{4em}}}
\ncom{\hsev}{\mbox{\hspace{7em}}}
\ncom{\vone}{\vskip 2ex}
\ncom{\vtwo}{\vskip 4ex}
\ncom{\vonee}{\vskip 1.5ex}
\ncom{\vthree}{\vskip 6ex}
\ncom{\vfour}{\vspace*{8ex}}
\ncom{\norm}{\|\;\;\|}
\ncom{\integ}[4]{\int_{#1}^{#2}\,{#3}\,d{#4}}
\ncom{\inp}[2]{\langle{#1},\,{#2} \rangle}
\ncom{\Inp}[2]{\Langle{#1},\,{#2} \Langle}
\ncom{\vspan}[1]{{{\rm\,span}\#1 \}}}
\ncom{\dm}[1]{\displaystyle {#1}}



\keywords{2-isometries, Dirichlet space, rank one perturbation}

\subjclass[2020]{47B02, 47B38}

\begin{document}
\title[Rank one perturbations of 2-isometries]{Rank one perturbations of 2-isometries}

\author[R. Nailwal]{Rajkamal Nailwal}
\address{Rajkamal Nailwal, Institute of Mathematics, Physics and Mechanics, Ljubljana, Slovenia.}
\email{raj1994nailwal@gmail.com \textup{(Corresponding author)}}

\begin{abstract} 
In this paper, we investigate when a rank-one perturbation of a $2$-isometry remains a $2$-isometry. As an application, we identify several classes of $2$-isometries on function spaces.
\end{abstract}
\maketitle

\section{Introduction}
Let $\mathcal{H}$ be a complex separable Hilbert space, and let $B(\mathcal{H})$ denote the unital $C^*$-algebra of bounded linear operators on $\mathcal{H}$. For any $T \in B(\mathcal{H})$, we denote the kernel and range of $T$ by $\operatorname{ker} T$ and $\operatorname{ran} T$, respectively. A closed subspace $U$ of $\mathcal{H}$ is said to be invariant under $T \in B(\mathcal{H})$ if $T(U)\subseteq U.$

An operator $T \in B(\mathcal{H})$ is called a \emph{$2$-isometry} if
\[
\Delta_{2}(T) := I - 2T^*T + T^{*2}T^2 = 0.
\]
It follows immediately from the definition that $T$ is a $2$-isometry if and only if
\[
\|x\|^2 - 2\|Tx\|^2 + \|T^2x\|^2 = 0, \quad \forall\, x \in \mathcal{H}.
\]

Let $u, v \in \mathcal{H}$, and define the bounded linear operator $K$ on $\mathcal{H}$ by
\[
K := u \otimes v, \quad \text{where } (u \otimes v)(x) = \langle x, v \rangle u, \quad  x \in \mathcal{H}.
\]
It is easy to observe that if both $u$ and $v$ are nonzero, then $\operatorname{ran} K$ is one-dimensional. In this case, $K$ is referred to as a \emph{rank-one operator} on $\mathcal{H}$.

Given $T\in B(\mathcal{H}),$ a rank $1$ perturbation of $T$ is defined by
\beq\label{perT}
\widetilde{T} :=  T + u \otimes v
\eeq
for some nonzero $u,v.$

In this paper, we investigate the following:

\textit{Under what conditions does the rank one perturbation of $2$-isometry (as defined by \eqref{perT})  remain a $2$-isometry?} 

Rank one perturbation has always been an important topic in operator theory. This is particularly useful in understanding the continuity of spectra, invariant subspaces, and the structural changes in operators (e.g. \cite{CGP23,DS22,DH21,T22}). Note that isometries are two isometries. Our investigation is motivated by \cite{YN1986}, \cite{ST2007}, which discuss rank one and compact perturbations of isometries, respectively.

Operators that are $2$-isometries form an important class of operators and have attracted considerable attention in recent research (see \cite{ACJS2019,MMS2016, RS1991} and references therein). Our interest in $2$-isometries comes from function theory. A classical example is the multiplication operator by $z$ on the Dirichlet space of the unit disc, which is a well-known $2$-isometry that is not an isometry. S. Richter investigated the class of $2$-isometries in connection with the structural theory of  $z$-invariant subspaces in the Dirichlet space (see \cite{R1988}). As $2$-isometries are generalizations of isometries, they serve as a motivating example class for conjectures and properties that may not hold for arbitrary bounded operators but might still extend beyond isometries (e.g. \cite{AO2004}).

The following theorem is the main result of this paper:

\begin{theorem}\label{main}
Let $T \in B(\mathcal{H})$ be a $2$-isometry and let $u, v \in \mathcal{H}$ be nonzero vectors with $\|v\| = 1$.  
Consider the rank-one operator $K = u \otimes v$, and define  
\[
S := \{ x \in \ker K : Tx \in \ker K \}.
\]  
Then the perturbed operator $\widetilde{T} := T + K$ is a $2$-isometry if and only if $v \in \ker \Delta_{2}(\widetilde{T})$ and exactly one of the following conditions holds:
\begin{enumerate}
    \item $S = \ker K$, i.e. $\ker K$ is an invariant subspace of $T$.
    
    \item The following two conditions are satisfied:
    \begin{enumerate}
        \item[$(a)$] 
        $
        \Delta_{2}(\widetilde{T})(S) \subseteq S
        $ (equivalently, $\Delta_2(\widetilde T)(S^\perp\cap\ker K)\subseteq S^\perp\cap\ker K); $
        \item[$(b)$] $
        \|u\|^2 = -2 \big(\gamma + \operatorname{Re} \langle u, T v \rangle \big)
        $
        where $\gamma \in \mathbb{R}$ is  given by
        \[
       \gamma= \operatorname{Re} \left( \frac{\langle T^* P_{\ker K} T^* u, x \rangle}{\langle T^* v, x \rangle} \right)
        \]
        where $x \in (S^\perp \cap \ker K) \setminus \{0\}.$
    \end{enumerate}
\end{enumerate}
Here, $P_{\ker K}$ denotes the orthogonal projection onto $\ker K$, and $\operatorname{Re}(z)$ denotes the real part of a complex number $z$.
\end{theorem}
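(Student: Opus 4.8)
The plan is to reduce the operator identity $\Delta_{2}(\widetilde{T})=0$ to a statement about a quadratic form on the hyperplane $\ker K$, and then to analyze that form by splitting $\ker K$ according to the linear functional $w\mapsto\langle Tw,v\rangle$. First I would record that since $u\neq 0$ we have $\ker K=\{x:\langle x,v\rangle=0\}=\{v\}^{\perp}$, so (using $\|v\|=1$) there is an orthogonal decomposition $\mathcal{H}=\mathbb{C}v\oplus\ker K$ with $P_{\mathbb{C}v}=v\otimes v$. Because $\Delta_{2}(\widetilde{T})$ is self-adjoint, $\widetilde{T}$ is a $2$-isometry iff the quadratic form $x\mapsto\langle\Delta_{2}(\widetilde{T})x,x\rangle$ vanishes identically; expanding $x=\langle x,v\rangle v+w$ shows this happens iff $\Delta_{2}(\widetilde{T})v=0$ (the stated necessary condition $v\in\ker\Delta_{2}(\widetilde{T})$) together with $\langle\Delta_{2}(\widetilde{T})w,w\rangle=0$ for all $w\in\ker K$. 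Moreover, once $\Delta_{2}(\widetilde{T})v=0$, self-adjointness gives $\langle\Delta_{2}(\widetilde{T})w,v\rangle=\langle w,\Delta_{2}(\widetilde{T})v\rangle=0$, so $\Delta_{2}(\widetilde{T})$ maps $\ker K$ into $\ker K$ and its restriction there is self-adjoint; hence on $\ker K$ vanishing of the quadratic form is equivalent to vanishing of the operator (by complex polarization).

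The computational heart is the observation that for $w\in\ker K$ one has $\widetilde{T}w=Tw$ and $\widetilde{T}^{2}w=T^{2}w+\beta(w)\,u$, where $\beta(w):=\langle Tw,v\rangle=\langle w,T^{*}v\rangle$. Using that $T$ is itself a $2$-isometry to cancel the term $\langle w,w'\rangle-2\langle Tw,Tw'\rangle+\langle T^{2}w,T^{2}w'\rangle$, I obtain for all $w,w'\in\ker K$ the sesquilinear formula
\[
\langle\Delta_{2}(\widetilde{T})w,w'\rangle=\|u\|^{2}\,\beta(w)\overline{\beta(w')}+\overline{\beta(w')}\langle T^{2}w,u\rangle+\beta(w)\langle u,T^{2}w'\rangle .
\]
Note $S=\ker K\cap\ker\beta$. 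If $\beta\equiv 0$ on $\ker K$ --- equivalently $T^{*}v\in\mathbb{C}v$, equivalently $T(\ker K)\subseteq\ker K$, i.e. $S=\ker K$ --- the right-hand side vanishes identically, so condition (1) together with $v\in\ker\Delta_{2}(\widetilde{T})$ already yields $\Delta_{2}(\widetilde{T})=0$; this settles case (1) in both directions. Since (1) and (2) are mutually exclusive (the latter presupposes $S\subsetneq\ker K$), the phrase ``exactly one'' reduces to ``at least one.''

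When $\beta\not\equiv 0$ (so $S\subsetneq\ker K$ and $S^{\perp}\cap\ker K$ is one-dimensional), I would split $\ker K=S\oplus(S^{\perp}\cap\ker K)$ and read off the three blocks of $\Delta_{2}(\widetilde{T})|_{\ker K}$ from the displayed formula. Since $\beta$ vanishes on $S$, the $S\times S$ block vanishes automatically, forcing $\Delta_{2}(\widetilde{T})(S)\subseteq S^{\perp}\cap\ker K$; for $s\in S$ and $t\in S^{\perp}\cap\ker K$ the formula collapses to $\langle\Delta_{2}(\widetilde{T})s,t\rangle=\overline{\beta(t)}\,\langle s,T^{*2}u\rangle$, and as $\beta(t)\neq 0$ this off-diagonal block vanishes exactly when $T^{*2}u\perp S$, which together with the automatic inclusion is precisely condition $(a)$, namely $\Delta_{2}(\widetilde{T})(S)\subseteq S$ (and hence $\Delta_{2}(\widetilde{T})(S)=0$). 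Finally, fixing nonzero $x\in S^{\perp}\cap\ker K$, the remaining one-dimensional block vanishes iff $\langle\Delta_{2}(\widetilde{T})x,x\rangle=0$, i.e. iff $\|u\|^{2}=-2\operatorname{Re}\!\big(\langle x,T^{*2}u\rangle/\langle x,T^{*}v\rangle\big)$. To reach the stated form of $(b)$ I would write $T^{*}u=P_{\ker K}T^{*}u+\langle T^{*}u,v\rangle v$, apply $T^{*}$ to get $T^{*2}u=T^{*}P_{\ker K}T^{*}u+\langle T^{*}u,v\rangle\,T^{*}v$, then divide by $\langle T^{*}v,x\rangle$ and take real parts, turning the ratio into $\gamma+\operatorname{Re}\langle u,Tv\rangle$ (using $\langle T^{*}u,v\rangle=\langle u,Tv\rangle$); one also checks $\gamma$ is real and independent of $x$, since numerator and denominator scale identically under $x\mapsto cx$, and that $\langle T^{*}v,x\rangle=\overline{\beta(x)}\neq 0$.

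I expect the main obstacle to be bookkeeping rather than a single hard idea: keeping the conjugations straight in the sesquilinear formula, and above all verifying that condition $(a)$ is genuinely equivalent to the vanishing of the off-diagonal block given the automatically forced inclusion $\Delta_{2}(\widetilde{T})(S)\subseteq S^{\perp}\cap\ker K$ --- this is what makes ``$\Delta_{2}(\widetilde{T})(S)\subseteq S$'' collapse to ``$\Delta_{2}(\widetilde{T})(S)=0$'' and thereby cleanly separate from the scalar condition $(b)$. Establishing the two equivalent phrasings of $(a)$ uses only that the off-diagonal blocks of a self-adjoint operator are mutual adjoints.
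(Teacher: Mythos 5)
Your proposal is correct, and it reaches the theorem by a genuinely different organization of the same underlying computation. The paper proves necessity and sufficiency as two separate passes, working throughout with the \emph{quadratic} form: for necessity it plugs $x\in\ker K$ into the norm identity, decomposes $Tx=u_1+\alpha v$ and $Tu_1=v_1+\beta u$, and extracts (ii)(b) by a coordinate computation (condition (ii)(a) being trivial in that direction, since $\Delta_2(\widetilde T)=0$); for sufficiency it re-does essentially the same computation on each summand of $\operatorname{span}\{v\}\oplus S\oplus(S^\perp\cap\ker K)$ and then upgrades ``quadratic form vanishes on an invariant subspace'' to ``operator vanishes there'' via polarization. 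You instead polarize once at the outset, deriving the closed sesquilinear formula
$\langle\Delta_2(\widetilde T)w,w'\rangle=\|u\|^2\beta(w)\overline{\beta(w')}+\overline{\beta(w')}\langle T^2w,u\rangle+\beta(w)\langle u,T^2w'\rangle$
for $w,w'\in\ker K$, where $\beta(w)=\langle Tw,v\rangle$, and then read the theorem off as the vanishing of the three blocks of this form relative to $\ker K=S\oplus(S^\perp\cap\ker K)$. This buys two things the paper's proof does not make explicit: both directions become a single chain of equivalences with no duplicated computation, and condition (ii)(a) is identified with the vanishing of the off-diagonal block --- equivalently, given the automatic inclusion $\Delta_2(\widetilde T)(S)\subseteq S^\perp\cap\ker K$ forced by the vanishing $S\times S$ block, with $\Delta_2(\widetilde T)(S)=\{0\}$ --- which explains \emph{why} a mere inclusion hypothesis suffices in the sufficiency direction. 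What the paper's two-pass structure buys in return is a necessity argument that uses nothing beyond norm identities, with no block bookkeeping. Your closing algebra, writing $T^{*2}u=T^*P_{\ker K}T^*u+\langle T^*u,v\rangle T^*v$ and taking real parts, correctly recovers the paper's $\gamma$ (your ratio is the complex conjugate of the paper's, so the real parts agree), and your remarks that $\langle T^*v,x\rangle\neq 0$ on $(S^\perp\cap\ker K)\setminus\{0\}$ and that the ratio is invariant under $x\mapsto cx$ settle the well-definedness of $\gamma$, which the theorem statement implicitly assumes.
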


\begin{remark}
\begin{enumerate}
    \item If \((i)\) does not hold, then \(S^\perp \cap \ker K\) is one-dimensional (see Remark~\ref{st-cap}), hence \(\gamma\) is unique. 
    Moreover, the condition \emph{(ii)(a)} can be checked easily.
    \item Using the property \(u \otimes (\alpha v) = (\overline{\alpha}\, u) \otimes v\), one may always choose \(v\) to have norm \(1\).
\end{enumerate}
\end{remark}

We devote the following section to the proof of Theorem~\ref{main}. The core idea behind the proof is to construct appropriate orthogonal decompositions of $\mathcal{H}$ that simplify the computation of norms.

\section*{Proof of Theorem~\ref{main}}
For the reader’s convenience, we recall a few elementary properties of rank-one operators.

\begin{proposition}
Let $u, v \in \mathcal{H}$. Then the following identities hold for all $x, y \in \mathcal{H}$ and  $V \in B(\mathcal{H})$:
\begin{enumerate}
    \item $(u \otimes v)(x \otimes y) = \langle x, v \rangle\, u \otimes y$;
    \item $V(u \otimes v) = (Vu) \otimes v$;
    \item $(u \otimes v)V = u \otimes (V^*v)$.
\end{enumerate}
\end{proposition}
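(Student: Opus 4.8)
The plan is to verify each of the three identities directly from the defining relation $(u \otimes v)(x) = \langle x, v \rangle u$, using the elementary fact that two bounded operators on $\mathcal{H}$ coincide precisely when they produce the same image on every vector $z \in \mathcal{H}$. Thus for each identity I would fix an arbitrary $z \in \mathcal{H}$, evaluate both sides at $z$, and check that the resulting vectors agree; since both sides are already known to be bounded operators, this pointwise comparison suffices.

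For identity (i), I would first apply the inner operator, writing $(x \otimes y)(z) = \langle z, y \rangle x$, and then feed this into $u \otimes v$, obtaining $(u \otimes v)(\langle z, y \rangle x) = \langle z, y \rangle \langle x, v \rangle u$ by linearity. Evaluating the claimed right-hand side gives $(\langle x, v \rangle\, u \otimes y)(z) = \langle z, y \rangle \langle x, v \rangle u$, the same vector, so the two operators coincide. For identity (ii), applying $V(u \otimes v)$ to $z$ yields $V(\langle z, v \rangle u) = \langle z, v \rangle Vu$ by linearity of $V$, which is exactly $((Vu) \otimes v)(z)$. For identity (iii), the single step that is not purely formal is the passage across the adjoint: applying $(u \otimes v)V$ to $z$ gives $\langle Vz, v \rangle u$, and the defining property $\langle Vz, v \rangle = \langle z, V^* v \rangle$ of the adjoint rewrites this as $\langle z, V^* v \rangle u = (u \otimes (V^* v))(z)$.

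Since every step is a direct computation, I do not anticipate a genuine obstacle. The only points requiring care are bookkeeping of the inner-product convention (linearity in the first argument, conjugate-linearity in the second), which governs how the scalars are extracted in (i) and (ii), and the correct invocation of the adjoint identity in (iii). A clean way to organize the write-up is to state once at the outset that operator equality will be checked by evaluating at an arbitrary $z \in \mathcal{H}$, and then dispatch the three identities in turn, each in a single line.
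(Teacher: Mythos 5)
Your proposal is correct, and it is exactly the routine pointwise verification the paper has in mind: the paper's own ``proof'' consists only of the remark that the identities are elementary to verify, and your evaluation at an arbitrary $z \in \mathcal{H}$ (with the adjoint identity $\langle Vz, v\rangle = \langle z, V^*v\rangle$ handling part (iii)) supplies precisely the omitted computation. No gaps; all three identities check out under the paper's convention of linearity in the first argument of the inner product.
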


\begin{proof}
 This is elementary to verify.
\end{proof}
The following simple fact describes the set $S$.
\begin{proposition}
    \label{lem:S_equiv_invariance}
Let $T\in B(\mathcal{H})$ and $u,v\in\mathcal{H}\setminus{\{0\}}$. Set $K:=u\otimes v$ and
\[
S:=\{x\in\ker K:\ Tx\in\ker K\},\qquad \widetilde T:=T+K.
\]
Then
$
S=\big(\operatorname{span}\{v,T^*v\}\big)^\perp.
$
\end{proposition}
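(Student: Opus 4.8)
The plan is to unwind the two defining conditions of $S$ into orthogonality relations and then recognize the resulting set as an orthogonal complement.

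First I would make $\ker K$ explicit. Since $K = u\otimes v$ acts by $Kx = \langle x,v\rangle\,u$ and $u\neq 0$, the vector $Kx$ vanishes exactly when $\langle x,v\rangle = 0$. Hence $\ker K = \{v\}^\perp = \big(\operatorname{span}\{v\}\big)^\perp$. (This is the one place where the hypothesis $u\neq 0$ is used; if $u=0$ then $K=0$ and $\ker K=\mathcal{H}$, so the statement would fail.)

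Next I would translate the second membership condition using the adjoint. For $x\in\ker K$, the requirement $Tx\in\ker K$ reads $\langle Tx,v\rangle = 0$, and moving $T$ across the inner product gives $\langle x,T^{*}v\rangle = 0$, i.e. $x\perp T^{*}v$. Combining this with $x\perp v$ shows that $x\in S$ if and only if $x$ is orthogonal to both $v$ and $T^{*}v$, which is precisely the condition $x\in\big(\operatorname{span}\{v,T^{*}v\}\big)^\perp$. This yields the claimed equality
\[
S=\big(\operatorname{span}\{v,T^{*}v\}\big)^\perp.
\]

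I do not expect any real obstacle: the argument is a direct computation. The only subtlety worth recording is that the identity holds whether or not $T^{*}v$ is a scalar multiple of $v$, since the orthogonal complement of a spanning set is unchanged by linear dependence among the spanning vectors; in the degenerate case $T^{*}v\in\operatorname{span}\{v\}$ one simply has $S=\{v\}^\perp=\ker K$, consistent with the invariance description in Theorem~\ref{main}(i).
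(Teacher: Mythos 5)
Your proof is correct and follows essentially the same route as the paper: identify $\ker K = \{v\}^{\perp}$ (using $u \neq 0$), rewrite $Tx \in \ker K$ as $\langle x, T^{*}v\rangle = 0$ via the adjoint, and conclude $S = v^{\perp}\cap (T^{*}v)^{\perp} = \big(\operatorname{span}\{v,T^{*}v\}\big)^{\perp}$. Your added remarks on the necessity of $u\neq 0$ and on the degenerate case $T^{*}v\in\operatorname{span}\{v\}$ are accurate bonuses but not needed for the equality itself.
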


\begin{proof}
Since $K=u\otimes v$, we have $\ker K=\{x:\langle x,v\rangle=0\}=v^\perp$. Moreover,
\[
Tx\in\ker K\ \Longleftrightarrow\ \langle Tx,v\rangle=0\ \Longleftrightarrow\ \langle x,T^*v\rangle=0,
\]
so $S=v^\perp\cap(T^*v)^\perp=(\operatorname{span}\{v,T^*v\})^\perp$.
\end{proof}
\begin{remark} \label{st-cap}
    We now record that since $S^\perp = \operatorname{span}\{v, T^* v\}$ and 
$\ker K = v^\perp$ we have
\[
S^\perp \cap \ker K =
\begin{cases}
\operatorname{span}\!\left\{T^* v - \langle T^* v, v\rangle v \right\}, & \text{if } T^* v \not\parallel v, \\[4pt]
\{0\}, & \text{if } T^* v \parallel v.
\end{cases}
\]
That is, $S^\perp \cap \ker K$ coincides with the orthogonal projection of $T^* v$ onto $v^\perp$.

\end{remark}

We are now ready to prove Theorem~\ref{main}.

\begin{proof}[Proof of Theorem~\ref{main}]
Let $K = u \otimes v$ be the rank one operator with non-zero vectors $u,v \in \mathcal{H},$ and $\|v\|=1.$ We will need the following orthogonal decompositions in the upcoming steps:
\beq\label{orth-dec}
\mathcal{H} = \ker K \oplus (\ker K)^\perp=\ker K^{*} \oplus (\ker K^{*})^\perp.
\eeq
Note that $\operatorname{ker} K = [\operatorname{span}(v)]^\perp,\operatorname{ker} K^{*} = [\operatorname{span}(u)]^\perp$.\\ 
\textbf{Necessity part:} Assume that $\widetilde{T} := T + K$ is a $2$-isometry. Immediately, $v \in \operatorname{ker} \Delta_{2}(\widetilde{T})$ and $(ii)(a)$ is satisfied.
Let $x \in \operatorname{ker} K=[\operatorname{span}(v)]^\perp$. 
Then 
\[
\widetilde{T}x = Tx \quad \text{and} \quad \widetilde{T}^2x = T^2x + KTx.
\]
Since $\widetilde{T}$ is a $2$-isometry, we have
\begin{equation} \label{eq:2iso-x}
\|x\|^2 - 2\|Tx\|^2 + \|T^2x + KTx\|^2 = 0.
\end{equation}
Expanding the last term
\begin{align*}
\|T^2x + KTx\|^2 &= \|T^2x\|^2 + 2\operatorname{Re} \langle KTx, T^2x \rangle + \|KTx\|^2.
\end{align*}
Plugging this into \eqref{eq:2iso-x}, and using the $2$-isometry condition for $T$, we obtain
\beq\label{KT-eq}
2\operatorname{Re} \langle KTx, T^2x \rangle + \|KTx\|^2 = 0.
\eeq
Let $S=\{x \in \operatorname{ker} K: Tx\in \operatorname{ker}K\}.$ Clearly, $S$ is a closed subspace of $\operatorname{ker} K.$ Note that if $S= \operatorname{ker} K,$ then we are done i.e. $[\operatorname{span}(v)]^{\perp}$ is an invariant subspace for $T.$ 

Let if possible there exists a nonzero $x \in S^{\perp}\cap \operatorname{ker} K.$ By \eqref{orth-dec}, we have $T(x)=u_1+u_2,$ where $u_1 \in \operatorname{ker} K, u_2 \in (\operatorname{ker}K)^{\perp}$  and $u_2\neq 0.$ By \eqref{KT-eq}, we have
\beqn
2\operatorname{Re}(\langle u_2, K^{*}Tu_1\rangle+\langle Ku_2, Tu_2  \rangle) + \|Ku_2\|^2 = 0.
\eeqn
Again we decompose $T(u_1)=v_1+v_2,$  where $v_1 \in \operatorname{ker} K^*,$ and $v_2 \in (\operatorname{ker}K^*)^{\perp}.$ We obtain

\[
2\operatorname{Re}(\langle u_2, K^{*}v_2\rangle+\langle Ku_2, Tu_2  \rangle) + \|Ku_2\|^2 = 0.
\]
Equivalently 
\[
2\operatorname{Re}(\langle(u\otimes v) u_2, v_2\rangle+\langle (u\otimes v)u_2, Tu_2  \rangle) + \|(u \otimes v) u_2\|^2 = 0.
\]
By \eqref{orth-dec}, we can take $u_2=\alpha v, \alpha \in \mathbb C\setminus \{0\}.$ and $v_2=\beta u, \beta \in \mathbb C.$ Then we have
 \[
2\operatorname{Re}(\langle(u\otimes v)\alpha v, \beta u\rangle+ \langle (u\otimes v)\alpha v, \alpha T v  \rangle) + \|(u \otimes v)\alpha v\|^2 = 0.
\]
By definition of $u \otimes v,$ and the condition $\|v\|=1,$ we get
 \[
2\operatorname{Re}(\alpha \overline{\beta}\|u\|^2+|\alpha|^2\langle u,  T v  \rangle) + |\alpha|^2\|u\|^2 = 0.
\]
Dividing  by $|\alpha|^2,$ we get
 \[
2\operatorname{Re}\left(\frac{\overline{\beta}}{\overline{\alpha}} \|u\|^2+\langle u,  T v  \rangle\right) + \|u\|^2 = 0.
\]
Note that $\alpha =\langle Tx, v \rangle, \beta =\frac{\langle Tu_1, u \rangle}{\|u\|^2}.$ Thus we have

 \beqn
     2\operatorname{Re}\left(\frac{\langle u,Tu_1 \rangle}{{\langle v, Tx \rangle}}+\langle u,  T v  \rangle\right) + \|u\|^2 = 0.
 \eeqn
 This yields that 
 \begin{equation*}
     2\operatorname{Re}\left(\frac{\langle u,Tu_1 \rangle}{{\langle v, Tx \rangle}}\right)= -\|u\|^2-2\operatorname{Re}\langle u,  T v  \rangle .
 \end{equation*}
Let $\gamma=\frac{-\|u\|^2-2\operatorname{Re}\langle u,  T v  \rangle}{2}$, we have
$$\operatorname{Re}\left(\frac{\langle u, Tu_1\rangle}{\langle v, Tx\rangle}\right)=\gamma, \quad  x \in (S^{\perp}\cap \operatorname{ker} K) \setminus \{0\}.$$
Let $P_{\operatorname{ker} K}$ be the orthogonal projection onto $\operatorname{ker} K.$ Since $u_1= P_{\operatorname{ker} K}Tx,$ we have

$$ \operatorname{Re}\frac{\langle u, TP_{\operatorname{ker} K}Tx\rangle}{\langle v, Tx\rangle}=\gamma, \quad  x \in (S^{\perp}\cap \operatorname{ker} K) \setminus \{0\},$$
Equivalently 
\beqn 
\operatorname{Re}\frac{\langle T^*P_{\operatorname{ker} K}T^*u, x\rangle}{\langle T^*v, x\rangle}=\gamma, \quad  x \in (S^{\perp}\cap \operatorname{ker} K) \setminus \{0\}.
\eeqn
This completes the proof of the necessity part.\\

\noindent\textbf{Sufficiency:}
Assume that $v$ satisfies the $2$-isometry condition for $\widetilde T:=T+K$, i.e., $\Delta_2(\widetilde T)v=0$, and that either \emph{(i)} $S=\ker K$ or \\
\emph{(ii)} The following are satisfied: \begin{enumerate}
    \item[$(a)$] $\Delta_2(\widetilde T)(S)\subseteq S,$   
    \item[$(b)$] $
        \|u\|^2 = -2 \big(\gamma + \operatorname{Re} \langle u, T v \rangle \big)
        $
        where $\gamma \in \mathbb{R}$ is  given by
        \[
       \gamma= \operatorname{Re} \left( \frac{\langle T^* P_{\ker K} T^* u, x \rangle}{\langle T^* v, x \rangle} \right)
        \]
        where $x \in (S^\perp \cap \ker K) \setminus \{0\}.$
\end{enumerate}
 Since $\Delta_2(\widetilde T)$ is self-adjoint and $\Delta_2(\widetilde T)v=0$, the one-dimensional subspace $[\operatorname{span} v]$ reduces $\Delta_2(\widetilde T)$; hence its orthogonal complement $\ker K=v^\perp$ is invariant for $\Delta_2(\widetilde T)$. As
\[
\mathcal H=[\operatorname{span} v]\oplus \ker K,\qquad 
\ker K=S\oplus (S^\perp\cap\ker K),
\]
it suffices to prove that $\Delta_2(\widetilde T)$ vanishes on $S$ and on $S^\perp\cap\ker K$.

\emph{Case A: $x\in S$.}
Then $Kx=0$ and $KTx=0$, so
\[
\langle \Delta_2(\widetilde T)x,x\rangle
=\|x\|^2-2\|Tx\|^2+\|T^2x\|^2
=\langle \Delta_2(T)x,x\rangle
=0,
\]
because $T$ is a $2$-isometry. Under (ii)(a) (or trivially under (i) when $S=\ker K$), the subspace $S$ is invariant for $\Delta_2(\widetilde T)$; by the polarization identity it follows that $\Delta_2(\widetilde T)\!\upharpoonright_S=0$.

\emph{Case B: $x\in S^\perp\cap\ker K$, $x\neq 0$.}
Write $Tx=u_1+\alpha v$ with $u_1\in\ker K$ and $\alpha=\langle Tx,v\rangle\neq 0$ (since $x\notin S$). Then
\[
KTx=\alpha u,\qquad \widetilde T x=Tx,\qquad \widetilde T^2 x=T^2x+\alpha u.
\]
Hence
\begin{align*}
\langle \Delta_2(\widetilde T)x,x\rangle
&=\|x\|^2-2\|Tx\|^2+\|T^2x+\alpha u\|^2 \\
&=\langle \Delta_2(T)x,x\rangle
+2\,\operatorname{Re}\langle \alpha u, T^2x\rangle+\|\alpha u\|^2 \\
&=|\alpha|^2\!\left(2\,\operatorname{Re}\!\Big[\frac{\overline{\beta}}{\overline{\alpha}}\|u\|^2+\langle u,Tv\rangle\Big]+\|u\|^2\right),
\end{align*}
where $\beta:=\dfrac{\langle Tu_1,u\rangle}{\|u\|^2}$. By $(ii)(b)$,
\[
\operatorname{Re}\!\left(\frac{\overline{\beta}}{\overline{\alpha}}\|u\|^2\right)=\gamma
\quad\text{and}\quad
\|u\|^2=-2\big(\gamma+\operatorname{Re}\langle u,Tv\rangle\big),
\]
which yields $\langle \Delta_2(\widetilde T)x,x\rangle=0$. Since $\Delta_2(\widetilde T)$ leaves $S^\perp\cap\ker K$ invariant (self-adjointness and invariance of $S$ and $\ker K$), polarization gives $$\Delta_2(\widetilde T)\!\upharpoonright_{S^\perp\cap\ker K}=0.$$
Combining the three orthogonal summands $[\operatorname{span} v]\oplus S\oplus(S^\perp\cap\ker K)=\mathcal H$, we conclude that $\Delta_2(\widetilde T)=0$ on $\mathcal H$. Hence $\widetilde T$ is a $2$-isometry.
\end{proof}
The following is an immediate consequence of Theorem~\ref{main}.
\begin{corollary}
   Let $u,v \in \mathcal{H}$ be nonzero vectors with $\|v\|=1,$ and $I$ be the identity operator on $\mathcal{H}.$ Then $I+u \otimes v $ is a $2$-isometry if and only if $v$ satisfy the $2$ isometry condition for  $I+u \otimes v.$ 
\end{corollary}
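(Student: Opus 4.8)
The plan is to derive this corollary directly from Theorem~\ref{main} by specializing to $T = I$. First I would check that the hypotheses of the main theorem are met: the identity is itself a $2$-isometry, since $\Delta_2(I) = I - 2I + I = 0$, and $u,v$ are the prescribed nonzero vectors with $\|v\|=1$. Thus Theorem~\ref{main} applies verbatim with $T = I$ and $K = u\otimes v$.

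The key computation is to identify the subspace $S$ in this situation. Because $I^\ast = I$, we have $T^\ast v = v$, so $\operatorname{span}\{v,\,T^\ast v\} = \operatorname{span}\{v\}$. By Proposition~\ref{lem:S_equiv_invariance} this gives
\[
S = \big(\operatorname{span}\{v,\,T^\ast v\}\big)^\perp = v^\perp = \ker K,
\]
so condition \emph{(i)} of Theorem~\ref{main}—that $\ker K$ be invariant under $T$—holds automatically. Equivalently, by Remark~\ref{st-cap}, since $T^\ast v = v \parallel v$ we obtain $S^\perp \cap \ker K = \{0\}$; consequently there is no nonzero $x$ with which to even state condition \emph{(ii)(b)}, so \emph{(i)} is the unique alternative that can be satisfied, consistent with the ``exactly one'' clause of the theorem.

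With condition \emph{(i)} in force, Theorem~\ref{main} collapses to the single remaining requirement $v \in \ker\Delta_2(\widetilde T)$, that is, that $v$ satisfies the $2$-isometry condition for $\widetilde T = I + u\otimes v$. Since this is precisely the condition appearing in the corollary, the desired equivalence follows immediately in both directions.

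I anticipate no genuine obstacle here: the argument is a direct invocation of the main theorem. The only point demanding care is the identification $S = \ker K$, which rests entirely on the fact that $T^\ast v$ is parallel to $v$ when $T = I$; once this is noted, the reduction to the hypothesis $\Delta_2(\widetilde T)v = 0$ is automatic.
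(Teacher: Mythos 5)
Your proposal is correct and matches the paper's intent exactly: the paper states this corollary as an immediate consequence of Theorem~\ref{main}, and your specialization $T=I$, with $T^*v=v$ forcing $S=\ker K$ (so condition \emph{(i)} holds trivially and \emph{(ii)} is vacuous), is precisely that reduction. Nothing is missing.
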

We now present an example of an isometry whose rank-one perturbation is also an isometry and it satisfies Theorem~\ref{main}(ii).
\begin{example}
Let $H = \mathbb{C}^2$, and define the isometry
\[
V = \begin{pmatrix} 0 & 1 \\ 1 & 0 \end{pmatrix},
\]
which is in fact unitary. Let $ e_1 = (1,0)^{\top},e_2= (0,1)^{\top}$. From \cite{YN1986}, it follows that $\widetilde{V}:=V-2e_1\otimes e_2$ is an isometry. In particular, it is $2$-isometry. It is not difficult to verify that it satisfies Theorem\ref{main}(ii) with $S$ being empty and  $\gamma=0$ 
\end{example}

\allowdisplaybreaks
\section{Applications}
In this section, we study a rank $1$ perturbation of $M_z$ on the classical Dirichlet space and the Hardy space on the unit bidisk. We will just briefly explain the Dirichlet space.

The \textit{Dirichlet space} \( \mathcal{D} \) is the space of holomorphic functions on the open unit disk \( \mathbb{D} \subset \mathbb{C} \), defined by:
\[
\mathcal{D} = \left\{ f(z) = \sum_{k=0}^{\infty} a_k z^k \ \middle| \ \|f\|_{\mathcal{D}}^2 := \sum_{k=1}^{\infty} (k+1)|a_k|^2 < \infty \right\}.
\]
The following is the well-known inner product on $\mathcal{D}$ which makes it a Hilbert space: for  $f(z) = \sum_{k=0}^{\infty} a_k z^k,g(z) = \sum_{k=0}^{\infty} b_k z^k,$ define
      $$
     \langle f,g\rangle_{\mathcal{D}}:= \sum_{k=1}^{\infty} (k+1)a_k \overline{b_k}.
    $$
 Multiplication by $z$, i.e. $M_z$, is a bounded operator on this space and also a $2$-isometry.
For a detailed exposition, we refer the reader to \cite{FKMR2014}. 

The following theorem identify a class of $2$-isometries of the type $M_z+p(z)\otimes1$ where $p(0)=0.$ Such operators are analytic i.e. $$\cap_{n=0}^{\infty}(M_z+p(z)\otimes1)^n(\mathcal{H})=\{0\}$$
(see \cite[Theorem~1.2]{CGP23}).
\begin{theorem}\label{p-per}
Let $p(z) = \sum_{i=1}^{k} a_i z^i$, where $a_i \in \mathbb{C}$. Then the operator $M_z + p(z) \otimes 1$ is a $2$-isometry on $(\mathcal{D}, \|\cdot\|_{\mathcal{D}})$ if and only if 
\[
 \sum_{i=1}^k i|a_i|^2=-2\operatorname{Re}(a_1).
\]
\end{theorem}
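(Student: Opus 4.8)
The plan is to apply Theorem~\ref{main} to $T=M_z$, $u=p$ and $v=1$; recall $M_z$ is a $2$-isometry on $\mathcal D$ and, with $\|z^n\|_{\mathcal D}^2=n+1$, one has $\|1\|_{\mathcal D}=1$. Two structural observations reduce everything to case (i). First, for $f=\sum_{k\ge0}a_kz^k$ we have $\langle f,1\rangle_{\mathcal D}=a_0=f(0)$, so $\ker K=\{f\in\mathcal D:\ f(0)=0\}$. Second, $M_z^*1=0$, since $\langle M_z^*1,z^m\rangle=\langle 1,z^{m+1}\rangle=0$ for all $m\ge0$. By Proposition~\ref{lem:S_equiv_invariance}, $S=(\operatorname{span}\{1,M_z^*1\})^\perp=(\operatorname{span}\{1\})^\perp=\ker K$; equivalently $\ker K$ is $M_z$-invariant, because $zf$ vanishes at the origin whenever $f$ does. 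Hence we are in case (i) of Theorem~\ref{main}, and it remains only to decide when $v=1\in\ker\Delta_2(\widetilde T)$.

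Next I would compute the images of $1$ under $\widetilde T=M_z+p\otimes1$. From $\widetilde T f=zf+f(0)p$ we get $\widetilde T 1=z+p$, and since $(z+p)(0)=0$ also $\widetilde T^2 1=z(z+p)=z^2+zp$. The diagonal part of the condition $\Delta_2(\widetilde T)1=0$ is the scalar identity
\[
\langle\Delta_2(\widetilde T)1,1\rangle=\|1\|_{\mathcal D}^2-2\|z+p\|_{\mathcal D}^2+\|z^2+zp\|_{\mathcal D}^2=0 .
\]
Writing $z+p=(1+a_1)z+\sum_{i=2}^k a_iz^i$ and $z^2+zp=(1+a_1)z^2+\sum_{i=2}^k a_iz^{i+1}$ and using $\|z^n\|_{\mathcal D}^2=n+1$, the two norms equal $2|1+a_1|^2+\sum_{i=2}^k(i+1)|a_i|^2$ and $3|1+a_1|^2+\sum_{i=2}^k(i+2)|a_i|^2$. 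Substituting, the displayed identity collapses to
\[
1-|1+a_1|^2-\sum_{i=2}^k i\,|a_i|^2=0,
\]
and expanding $|1+a_1|^2=1+2\operatorname{Re}(a_1)+|a_1|^2$ turns this into the asserted relation $\sum_{i=1}^k i|a_i|^2=-2\operatorname{Re}(a_1)$.

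The step I expect to be the main obstacle is that Theorem~\ref{main}(i) requires the full \emph{vector} identity $\Delta_2(\widetilde T)1=0$, not merely its diagonal value $\langle\Delta_2(\widetilde T)1,1\rangle=0$ obtained above. To finish one must also check the off-diagonal conditions $\langle\Delta_2(\widetilde T)1,g\rangle=0$ for every $g\in\ker K$. Since $\widetilde T g=zg$ and $\widetilde T^2 g=z^2g$ for such $g$, this reads
\[
\langle\Delta_2(\widetilde T)1,g\rangle=-2\langle z+p,zg\rangle+\langle z^2+zp,z^2g\rangle=0,
\]
and, writing $g=\sum_{m\ge1}g_mz^m$, a short computation with $\|z^n\|_{\mathcal D}^2=n+1$ reduces it to
\[
\langle\Delta_2(\widetilde T)1,g\rangle=-\sum_{m\ge1}(m+1)\,a_{m+1}\,\overline{g_m}.
\]
Establishing that these cross terms vanish for all $g$ is the crux of the proof and the place where the weight sequence $w_n=\|z^n\|_{\mathcal D}^2$ is decisive. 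I would concentrate the effort here: with $w_n=n+1$ the right-hand side vanishes for every $g\in\ker K$ precisely when $a_2=\cdots=a_k=0$, so before declaring the argument complete I would carefully reconcile this vector requirement with the scalar identity of the previous paragraph — re-examining the normalisation $\|1\|_{\mathcal D}=1$ and the exact weights of $\mathcal D$ — since the validity of the stated characterisation for $k\ge2$ hinges entirely on this point.
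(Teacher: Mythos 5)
Your setup and both of your computations are correct, and the worry you flag in your final paragraph is not a gap in \emph{your} argument that needs repair --- it is a disproof of the statement. Since $\Delta_2(\widetilde T)=0$ for any $2$-isometry, the vector condition $\Delta_2(\widetilde T)1=0$ required by Theorem~\ref{main} is certainly necessary, and your cross-term formula $\langle\Delta_2(\widetilde T)1,g\rangle=-\sum_{m\ge1}(m+1)a_{m+1}\overline{g_m}$ (which I have re-derived and confirmed, using $w_n=\|z^n\|_{\mathcal D}^2=n+1$) shows this forces $a_2=\cdots=a_k=0$; the scalar identity $\sum_i i|a_i|^2=-2\operatorname{Re}(a_1)$ does not imply that. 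A concrete counterexample, checkable with no reference to Theorem~\ref{main}: take $p(z)=-z+\tfrac{1}{\sqrt2}z^2$, so that $\sum_i i|a_i|^2=1+2\cdot\tfrac12=2=-2\operatorname{Re}(a_1)$ and the displayed condition of Theorem~\ref{p-per} holds. For $f=1+tz$ with $t\in\mathbb R$ one has $\widetilde Tf=\bigl(t+\tfrac{1}{\sqrt2}\bigr)z^2$ and $\widetilde T^2f=\bigl(t+\tfrac{1}{\sqrt2}\bigr)z^3$, hence
\[
\|f\|^2-2\|\widetilde Tf\|^2+\|\widetilde T^2f\|^2
=(1+2t^2)-6\bigl(t+\tfrac{1}{\sqrt2}\bigr)^2+4\bigl(t+\tfrac{1}{\sqrt2}\bigr)^2
=-2\sqrt2\,t\neq0 \quad (t\neq0),
\]
so $M_z+p\otimes1$ is \emph{not} a $2$-isometry. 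The ``if'' direction of Theorem~\ref{p-per} therefore fails whenever the scalar identity is satisfied with some $a_i\neq0$, $i\ge2$.

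Comparing with the paper: its proof performs exactly your reduction (case (i) of Theorem~\ref{main}, since $M_z^*1=0$ makes $\ker K=\{f:f(0)=0\}$ invariant) and exactly your scalar computation, and then stops --- it tacitly identifies ``$1$ satisfies the $2$-isometry condition for $\widetilde{M_z}$'' with the diagonal identity $\langle\Delta_2(\widetilde{M_z})1,1\rangle=0$, whereas its own Theorem~\ref{main} demands the full vector condition $1\in\ker\Delta_2(\widetilde{M_z})$. These are not equivalent: vanishing of the quadratic form on $\ker K$ together with the diagonal identity only gives $\Delta_2(\widetilde{M_z})1\perp1$, and the off-diagonal components are precisely your cross terms. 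So the gap you anticipated is real, it sits in the paper's proof, and your formula shows it cannot be filled. What your computation actually proves is the corrected statement: $M_z+p(z)\otimes1$ is a $2$-isometry on $\mathcal D$ if and only if $a_2=\cdots=a_k=0$ and $|a_1|^2=-2\operatorname{Re}(a_1)$ (equivalently $|1+a_1|=1$), which is consistent with the paper's subsequent corollary on perturbations $\alpha z^n\otimes1$. Rather than trying to reconcile your two identities, you should conclude that Theorem~\ref{p-per} as stated is false and record this corrected version.
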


\begin{proof}
Since $[\operatorname{span} (1)]^{\perp}$ is an invariant subspace for $M_z,$  it suffices to verify that $1$ satisfy the $2$-isometry condition for  $\widetilde{M_z} := M_z + p(z) \otimes 1.$

We compute:
\[
\widetilde{M_z}(1) = z + p(z), \quad \widetilde{M_z}^2(1) = z^2 + z p(z).
\]
Then,
\[
\|\widetilde{M_z}(1)\|^2 = \sum_{i=1}^{k}(i+1)|b_i|^2, \quad \|\widetilde{M_z}^2(1)\|^2 = \sum_{i=2}^{k+1}(i+1)|c_i|^2,
\]
where
\[
b_i =
\begin{cases}
a_i, & i \neq 1, \\
a_1 + 1, & i = 1,
\end{cases}
\quad \text{and} \quad
c_i =
\begin{cases}
a_{i-1}, & i \neq 2, \\
a_1 + 1, & i = 2.
\end{cases}
\]

Now consider the $2$-isometry identity:
\begin{align*}
\|1\|^2 - 2\|\widetilde{M_z}(1)\|^2 + \|\widetilde{M_z}^2(1)\|^2 
&= 1 - 2\sum_{i=1}^k (i+1)|b_i|^2 + \sum_{i=2}^{k+1} (i+1)|c_i|^2 \\
&= 1 - 4|a_1 + 1|^2 - 2\sum_{\substack{i=2}}^k (i+1)|a_i|^2 \\
&\quad + \sum_{\substack{i=3}}^{k+1} (i+1)|a_{i-1}|^2 + 3|a_1 + 1|^2 \\
&= 1 - |a_1 + 1|^2 - 2\sum_{i=2}^k (i+1)|a_i|^2 \\
&\quad+ \sum_{i=2}^{k} (i+2)|a_i|^2 \\
&= -2\operatorname{Re}(a_1) - \sum_{i=1}^k i|a_i|^2.
\end{align*}
This completes the proof.
\end{proof}
Next we see some special cases where $p(z)=\alpha z^n.$
\begin{corollary}
Let $n \geq 0, \alpha \neq 0$. Then the operator $M_z + \alpha z^n \otimes 1$ is a $2$-isometry on $(\mathcal{D},\|\cdot\|_{\mathcal{D}})$ if and only if 
 $n = 1$ and $|\alpha|^2 = -2\operatorname{Re}(\alpha).$
\end{corollary}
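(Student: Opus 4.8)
The plan is to specialize Theorem~\ref{p-per} to the monomial $p(z)=\alpha z^n$ and read off exactly when the single scalar condition
\[
\sum_{i=1}^k i|a_i|^2 = -2\operatorname{Re}(a_1)
\]
can be satisfied. For $p(z)=\alpha z^n$ with $\alpha\neq 0$ and $n\ge 1$, the only nonzero coefficient is $a_n=\alpha$, so the left-hand side collapses to $n|\alpha|^2$, while the right-hand side is $-2\operatorname{Re}(a_1)$, which equals $-2\operatorname{Re}(\alpha)$ when $n=1$ and equals $0$ when $n\ge 2$. Thus the criterion becomes $n|\alpha|^2 = -2\operatorname{Re}(\alpha)\,\delta_{n,1}$, and I would split into cases on $n$.

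First I would dispose of the degenerate case $n=0$: here $p(z)=\alpha$ is a nonzero constant, so $p(0)=\alpha\neq 0$. Since Theorem~\ref{p-per} is stated for polynomials with $p(0)=0$ (equivalently $a_0=0$), one must argue $n=0$ separately. The cleanest route is to observe that $M_z+\alpha\otimes 1$ fails the $2$-isometry condition directly: testing the constant function $1$ and using $\|1\|_{\mathcal D}^2=1$, one computes $\widetilde{M_z}(1)=\alpha+z$ and $\widetilde{M_z}^2(1)=\alpha z + z^2$, and checks that $\|1\|^2-2\|\widetilde{M_z}(1)\|^2+\|\widetilde{M_z}^2(1)\|^2\neq 0$ for any $\alpha\neq 0$ (the constant term $\alpha$ contributes nothing to the norms since the $\mathcal D$-norm ignores the $k=0$ coefficient, forcing the expression to be negative). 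Alternatively, and more economically, note that $M_z+\alpha\otimes 1$ is simply $M_{z+\alpha}$ is \emph{not} quite right since $\alpha\otimes 1$ is rank one, so the direct computation is the safe option.

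For $n\ge 2$ the condition reads $n|\alpha|^2=0$, which is impossible for $\alpha\neq 0$; hence no such $n$ yields a $2$-isometry. For $n=1$ the condition reads $|\alpha|^2=-2\operatorname{Re}(\alpha)$, which is precisely the asserted equality. Combining the three cases gives that $M_z+\alpha z^n\otimes 1$ is a $2$-isometry if and only if $n=1$ and $|\alpha|^2=-2\operatorname{Re}(\alpha)$.

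The only genuine obstacle is the $n=0$ case, since it falls outside the hypothesis $p(0)=0$ of Theorem~\ref{p-per} and must be handled by a standalone computation rather than by quoting the theorem; everything else is an immediate substitution into the closed-form criterion. I would therefore present the $n\ge 1$ cases as a one-line specialization of Theorem~\ref{p-per} and devote the bulk of the proof to verifying that $n=0$ cannot produce a $2$-isometry.
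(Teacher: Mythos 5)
Your treatment of $n\ge 1$ is correct and is exactly the paper's route: substitute $p(z)=\alpha z^n$ into Theorem~\ref{p-per}, getting $n|\alpha|^2=0$ (impossible) for $n\ge 2$ and $|\alpha|^2=-2\operatorname{Re}(\alpha)$ for $n=1$. You also correctly identified that $n=0$ falls outside the hypothesis $p(0)=0$ and needs a standalone computation, which is what the paper does. However, your $n=0$ computation contains a genuine error. Since $(\alpha\otimes 1)(f)=\langle f,1\rangle_{\mathcal D}\,\alpha$ and $\langle z+\alpha,1\rangle_{\mathcal D}=\alpha$, one has
\[
\widetilde{M_z}^2(1)=\widetilde{M_z}(z+\alpha)=z^2+\alpha z+\alpha^2,
\]
not $\alpha z+z^2$ as you wrote. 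Your formula rests on the claim that ``the $\mathcal D$-norm ignores the $k=0$ coefficient,'' but that reading is inconsistent with your own use of $\|1\|_{\mathcal D}^2=1$, and it cannot be the intended normalization: if constants had norm zero, then $M_z$ itself would fail the $2$-isometry test at $1$ (one would get $0-2\cdot 2+3=-1\neq 0$). The working convention, forced by the paper's own computations, is $\|z^k\|^2_{\mathcal D}=k+1$ for all $k\ge 0$, so the constant coefficient does contribute.

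This is not a cosmetic slip: with your formula for $\widetilde{M_z}^2(1)$ and the correct norm, the test expression evaluates to
\[
1-2\bigl(2+|\alpha|^2\bigr)+\bigl(3+2|\alpha|^2\bigr)=0,
\]
so your argument would (incorrectly) suggest that $M_z+\alpha\otimes 1$ passes the $2$-isometry test at $1$, and you would have no contradiction for $n=0$; your parenthetical claim that the expression is ``forced to be negative'' is also wrong. With the correct $\widetilde{M_z}^2(1)=z^2+\alpha z+\alpha^2$ one gets
\[
1-2\bigl(2+|\alpha|^2\bigr)+\bigl(3+2|\alpha|^2+|\alpha|^4\bigr)=|\alpha|^4>0
\]
for $\alpha\neq 0$ (the paper records this value as $|\alpha|^2$ owing to typographical slips in the intermediate norms, but in either form it is strictly positive), which is what actually rules out the $n=0$ case. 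So the structure of your proof matches the paper's, but the one step you flagged as ``the only genuine obstacle'' is precisely the step that does not work as written.
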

\begin{proof} $n\geq1$ follows from Theorem~\ref{p-per} by applying it to the polynomial $p(z) = \alpha z^n$. Now we will show that for $n=0, \alpha \neq 0,$   then $M_z + \alpha \otimes 1$ is never a  $2$-isometry. Note that
\[
\widetilde{M_z}(1) = z + \alpha, \quad \widetilde{M_z}^2(1) = z^2 + \alpha z + \alpha^2.
\]
Evaluating the 2-isometry condition on the constant function $f(z)=1,$
\[
\|1\|^2 - 2\|\widetilde{M_z}(1)\|^2 + \|\widetilde{M_z}^2(1)\|^2 = 1 - 2(|\alpha| + 2) + (|\alpha|^2 + 2|\alpha| + 3) = |\alpha|^2.
\]
Since $\alpha \neq 0,$  $\widetilde{M_z}$ is not a $2$-isometry.
\end{proof}

To state the next result, we need the following.  The \textit{Hardy space on the unit bidisc} is defined as follows:

$$
H^2(\mathbb{D}^2) = \left\{ f(z) = \sum_{m, n = 0}^{\infty} a_{m,n} \, z_1^m z_2^n \  \middle| \ \  \|f\|_{H^2}^2 := \sum_{m, n = 0}^{\infty} |a_{m,n}|^2 < \infty \right\}.
$$

It is easy to verify that the multiplication operators $M_{z_i}$, for $i=1,2$, are isometries on $H^2(\mathbb{D}^2)$. In particular, these operators are $2$-isometries. 

We note that, so far, we have not utilized Theorem \ref{main}(ii) to identify $2$-isometry operators. While this condition is indeed subtle, we have succeeded in identifying an operator that satisfies it. We now present this result and emphasize that the approach outlined here can be used to obtain more classes of $2$-isometries.

\begin{corollary}\label{cond-ii}
The operator $\widetilde{M} := M_{z_1} + (-z_1^2 + z_2) \otimes z_1$ is a $2$-isometry on $(H^2(\mathbb{D}^2),\|\cdot\|_{H^2(\mathbb{D}^2)}).$
\end{corollary}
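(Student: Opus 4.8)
The plan is to apply Theorem~\ref{main} with $T=M_{z_1}$, which is an isometry on $H^2(\mathbb D^2)$ and hence a $2$-isometry, and with rank-one data $u=-z_1^2+z_2$, $v=z_1$. First I would record the bookkeeping in the orthonormal monomial basis $\{z_1^m z_2^n\}$: one has $\|v\|=\|z_1\|=1$ (as the theorem requires) and $\|u\|^2=2$, while $M_{z_1}^*$ acts by $M_{z_1}^*(z_1^m z_2^n)=z_1^{m-1}z_2^n$ for $m\geq 1$ and annihilates monomials with $m=0$; in particular $T^*v=M_{z_1}^* z_1=1$. Since $v=z_1$ and $T^*v=1$ are linearly independent, Proposition~\ref{lem:S_equiv_invariance} and Remark~\ref{st-cap} give $S=(\operatorname{span}\{z_1,1\})^\perp$ and $S^\perp\cap\ker K=\operatorname{span}\{1\}$, which is one-dimensional (so $\gamma$ is unique and I may take $x=1$). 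Because $M_{z_1}1=z_1\notin\ker K$, the subspace $\ker K$ is not invariant under $T$; hence alternative \emph{(i)} fails and I must verify alternative \emph{(ii)} together with the hypothesis $v\in\ker\Delta_2(\widetilde M)$.

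The computational core then consists of three verifications, all carried out from $\widetilde M^*w=M_{z_1}^* w+\langle w,u\rangle z_1$ (using $(u\otimes v)^*=v\otimes u$). For the condition $\Delta_2(\widetilde M)v=0$, the perturbation is engineered so that $\widetilde M z_1=z_1^2+(-z_1^2+z_2)=z_2$ and $\widetilde M^2 z_1=z_1z_2$; the adjoint formulas $\widetilde M^*z_2=z_1$ and $\widetilde M^*(z_1z_2)=z_2$ then yield $\Delta_2(\widetilde M)z_1=z_1-2z_1+z_1=0$. For condition \emph{(ii)(a)} I would use the equivalent form $\Delta_2(\widetilde M)(S^\perp\cap\ker K)\subseteq S^\perp\cap\ker K$: since $\widetilde M 1=z_1$ and $\widetilde M^2 1=z_2$, the same formulas give $\Delta_2(\widetilde M)1=1-2\cdot 1+1=0\in\operatorname{span}\{1\}$. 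For \emph{(ii)(b)} I compute $T^*u=M_{z_1}^*(-z_1^2+z_2)=-z_1\in(\ker K)^\perp$, so $P_{\ker K}T^*u=0$ and therefore $\gamma=0$; combined with $\langle u,Tv\rangle=\langle -z_1^2+z_2,\,z_1^2\rangle=-1$ this gives $-2(\gamma+\operatorname{Re}\langle u,Tv\rangle)=2=\|u\|^2$, as required.

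The only genuinely delicate point is that the hypothesis $v\in\ker\Delta_2(\widetilde M)$ demands the full vector identity $\Delta_2(\widetilde M)v=0$ rather than the mere vanishing of the quadratic form $\|v\|^2-2\|\widetilde M v\|^2+\|\widetilde M^2 v\|^2=1-2+1=0$; this is exactly what forces the explicit adjoint computations above instead of the shorter norm check. Once the vector identity, \emph{(ii)(a)}, and \emph{(ii)(b)} are all in hand, Theorem~\ref{main}(ii) applies directly and yields that $\widetilde M$ is a $2$-isometry.
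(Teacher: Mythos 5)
Your proposal is correct and follows essentially the same route as the paper: both verify $z_1\in\ker\Delta_2(\widetilde M)$ by the explicit adjoint computations $\widetilde M z_1=z_2$, $\widetilde M^2 z_1=z_1z_2$, $\widetilde M^*z_2=z_1$, check (ii)(a) by showing $\Delta_2(\widetilde M)1=0$ on $S^\perp\cap\ker K=\operatorname{span}\{1\}$, and check (ii)(b) via $P_{\ker K}M_{z_1}^*u=P_{\ker K}(-z_1)=0$ (so $\gamma=0$) together with $\|u\|^2=2=-2\operatorname{Re}\langle u,M_{z_1}z_1\rangle$. Your additional remarks (that alternative (i) fails and that the full vector identity, not just the quadratic form, is needed) are accurate refinements of the same argument.
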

\begin{proof}
In view of Theorem \ref{main}, it suffices to verify that $z_1\in \operatorname{ker}\Delta_2(\widetilde{M}),$ and Theorem \ref{main}(ii)(a,b) holds. Note that $\widetilde{M} z_1 = z_2$ and $\widetilde{M}^* z_2 = z_1$ and
\[
\widetilde{M}^* \widetilde{M} z_1 = z_1, \quad 
\widetilde{M}^2 z_1 = z_1 z_2, \quad 
\widetilde{M}^{*2} \widetilde{M}^2 z_1 = z_1.
\]
Hence
\[
\big(I - 2\widetilde{M}^* \widetilde{M} + \widetilde{M}^{*2} \widetilde{M}^2\big) z_1
= 0.
\]
 Thus we have $z_1\in \operatorname{ker}\Delta_2(\widetilde{M}).$ Now
since $S = \{1, z_1\}^\perp$ (using Remark~\ref{st-cap}), $ S^\perp\cap \operatorname{ker} K = \operatorname{span}\{1\}.$ Note that $\Delta_2(\widetilde{M})1=0.$ 
Therefore, condition \emph{(ii)(a)} is satisfied.

 Since $M_{z_1}^*(-z_1^2 + z_2) = -z_1$, we obtain
$$
M_{z_1}^* P_{[\operatorname{span}(z_1)]^{\perp}} M_{z_1}^{*}(-z_1^2 + z_2) = 0.
$$
We also have
$$
\| -z_1^2 + z_2 \|^2 = -2 \operatorname{Re} \langle -z_1^2 + z_2, M_{z_1} z_1 \rangle.
$$
Thus, the conditions of Theorem \ref{main}(ii) are satisfied. Hence, the proof is complete.
\end{proof}

\noindent{\bf Author's contribution:} The paper is written by Rajkamal Nailwal solely.\\
\medskip
\noindent{\bf Availability of data and materials:}\\
\medskip
No data was used for the research described in the article.\\
\medskip
\noindent{\bf Funding:} Not applicable\\
\medskip
\noindent{\bf Declaration of competing interest:}\\
\medskip
The author declares that he has no competing interests.
  
{}

\end{document}